\definecolor{gr}{rgb}   {0.,   0.69,   0.23 }
\definecolor{bl}{rgb}   {0.,   0.5,   1. }
\definecolor{mg}{rgb}   {0.85,  0.,    0.85}
\definecolor{yl}{rgb}   {0.8,  0.7,   0.}
\definecolor{or}{rgb}  {0.7,0.2,0.2}
\newtheorem{theorem}{Theorem} [section]
\newtheorem{lemma}[theorem]{Lemma}
\newtheorem{proposition}[theorem]{Proposition}
\newtheorem{remark}[theorem]{Remark}
\newtheorem{definition}[theorem]{Definition}
\newcommand{\I}{\hspace{0.5mm}\text{I}\hspace{0.5mm}}
\newcommand{\II}{\text{I \hspace{-2.8mm} I} }
\newcommand{\III}{\text{I \hspace{-2.9mm} I \hspace{-2.9mm} I}}
\newcommand{\noi}{\noindent}
\newcommand{\R}{\mathbb{R}}
\newcommand{\E}{\mathbb{E}}
\newcommand{\al}{\alpha}
\newcommand{\dl}{\delta}
\newcommand{\Dl}{\Delta}
\newcommand{\eps}{\varepsilon}
\newcommand{\wt}{\widetilde}
\newcommand{\dx}{\partial_x}
\newcommand{\dt}{\partial_t}
\newcommand{\dd}{\partial}
\renewcommand{\o}{\omega}
\renewcommand{\O}{\Omega}
\newcommand{\les}{\lesssim}
\newcommand{\jb}[1]
{\langle #1 \rangle}
\newcommand{\ind}{\mathbf 1}
\newcommand{\N}{\mathbb{N}}
\newtheorem*{ackno}{Acknowledgements}
\numberwithin{equation}{section}
\numberwithin{theorem}{section}
\tikzset{
	dot/.style={circle,fill=black,draw=black,inner sep=0pt,minimum size=0.5mm},
	>=stealth,
	}
\tikzset{
	dot2/.style={circle,fill=black,draw=black,inner sep=0pt,minimum size=0.2mm},
	>=stealth,
	}
\tikzset{
	ddot/.style={circle,fill=white,draw=black,inner sep=0pt,minimum size=0.8mm},
	>=stealth,
	}
\tikzset{decision/.style={ 
        draw,
        diamond,
        aspect=1.5
    }}
\tikzset{dia2/.style
={diamond,fill=white,draw=black,inner sep=0pt,minimum size=1mm},
	>=stealth,
	}
\tikzset{dia/.style
={star,fill=black,draw=black,inner sep=0pt,minimum size=1mm},
	>=stealth,
	}
\tikzset{dia/.style
={diamond,fill=black,draw=black,inner sep=0pt,minimum size=1.3mm},
	>=stealth,
	}
\def\<#1>{\xusebox{#1}}
\newcommand{\heat}{\textup{heat}}
\newcommand{\xx}{\mathbf{x}}
\newcommand{\yy}{\mathbf{y}}
\newcommand{\Xc}{\mathcal{X}}
\begin{document}

\baselineskip = 14pt

\title[Local linearization of SNLW]
{
On the local linearization of the one-dimensional stochastic wave equation 
with a multiplicative space-time white noise forcing
} 
\author[J.~Huang, T.~Oh , and M.~Okamoto]
{Jingyu Huang, Tadahiro Oh, and Mamoru Okamoto}

\address{Jingyu Huang\\
School of Mathematics\\
Watson Building\\
University of Birmingham\\
Edgbaston\\
Birmingham\\
B15 2TT\\ United Kingdom}

\email{j.huang.4@bham.ac.uk}

\address{
Tadahiro Oh, School of Mathematics\\
The University of Edinburgh\\
and The Maxwell Institute for the Mathematical Sciences\\
James Clerk Maxwell Building\\
The King's Buildings\\
Peter Guthrie Tait Road\\
Edinburgh\\ 
EH9 3FD\\
 United Kingdom}

\email{hiro.oh@ed.ac.uk}

\address{
Mamoru Okamoto\\
Department of Mathematics\\
 Graduate School of Science\\ Osaka University\\
Toyonaka\\ Osaka\\ 560-0043\\ Japan}
\email{okamoto@math.sci.osaka-u.ac.jp} 
\subjclass[2020]{35R60, 35L05, 60H15}
\keywords{local linearization; stochastic wave equation; multiplicative noise; null coordinates}

\begin{abstract}
In this note, we establish a bi-parameter linear localization
of 
the one-dimensional stochastic wave equation with a 
multiplicative space-time white noise forcing.

\end{abstract}

%
\maketitle

%

%

\section{Introduction}

We consider the following stochastic wave equation (SNLW) on $\R \times \R$:
\begin{equation}
\begin{cases}
\dt^2 u - \dx^2 u = F (u) \xi\\
(u,\dt u) |_{t = 0} = (u_0,u_1), 
\end{cases}
\qquad (t, x) \in \R \times \R, 
\label{NLW1}
\end{equation}

\noi
where $F : \R \to \R$ is a Lipschitz continuous function
and
$\xi$ denotes the (Gaussian) space-time white noise on $\R\times \R$
whose space-time covariance is formally given by 
\begin{align}
\E\big[\xi(t_1, x_1)\xi(t_2, x_2) \big] = \dl(t_1 - t_2) \dl(x_1 - x_2).
\label{white1}
\end{align}

\noi
The expression \eqref{white1} is merely formal
but we can make it rigorous by testing it against a test function.

\begin{definition}\label{DEF:white} \rm
A two-parameter white noise $\xi$ on $\R^2$
is a family of centered Gaussian random variables
$\{ \xi(\varphi): \varphi \in L^2(\R^2)\}$
such that 
\begin{align*}
\E\big[ \xi(\varphi)^2 \big] = \| \varphi\|_{L^2(\R^2)}^2
\quad \text{and} \qquad 
\E\big[ \xi(\varphi_1) \xi( \varphi_2)\big] = \jb{ \varphi_1, \varphi_2}_{L^2(\R^2)}.
\end{align*}

\end{definition}

In 
\cite{Walsh}, Walsh studied the Ito solution theory  for \eqref{NLW1}
and proved its  well-posedness.
See, for example, 
\cite[p.323, Exercise 3.7]{Walsh}
and \cite[p.45]{Dalang}, 
where 
the fundamental properties of solutions to \eqref{NLW1} are stated (implicitly).
For readers' convenience,
we state and prove basic properties 
of solutions to \eqref{NLW1} 
 in Appendix~\ref{SEC:A}.
Our main goal in this note is to study 
the local fluctuation property 
of solutions to \eqref{NLW1}.

Let us first consider the following stochastic heat equation:
\begin{equation}
\begin{cases}
\dt u - \dx^2 u = F (u) \xi\\
u|_{t = 0} = u_0, 
\end{cases}
\qquad (t, x) \in \R \times \R.
\label{NLH1}
\end{equation}

\noi
It is well known that, under suitable assumptions
on $F$ and $u_0$,   the solution to \eqref{NLH1} {\it locally linearizes};
namely by  letting $Z_\heat$ denote 
the linear solution satisfying
$\dt Z_\heat - \dx^2 Z_\heat = \xi$
with $Z_\heat|_{t= 0} =0$, 
the solution $u$ to \eqref{NLH1} satisfies
\begin{align}
u(t , x + \eps) - u(t , x) = F(u(t , x))\big\{Z_\heat(t , x + \eps) - Z_\heat(t , x)\big\} 
+ R_\eps(t,x), 
\label{fluc1}
\end{align}

\noi
where, as $\eps \to 0$, 
the remainder term 
$R_\eps(t,x) $
 tends to 0 much faster than $Z_\heat(t , x + \eps) - Z_\heat(t , x)$.
See, for example, 
\cite{Hairer1, KSXZ, FKM, HP}.
The relation \eqref{fluc1}
states that, for fixed $t$,  
local
fluctuations (in $x$)
of the solution $u(t)$ are essentially given by 
those of $Z_\heat(t)$.
In other words, if we ignore
precise regularity conditions, 
then \eqref{fluc1}
states that 
$u(t)$ is controlled by $Z_\heat(t)$
in the sense of controlled paths due to Gubinelli \cite{Gubi};
see \cite[Definition 4.6]{FH}.

In \cite{HK}, 
Khoshnevisan and the first author  
studied an analogous issue
for SNLW \eqref{NLW1}.
In particular, 
they showed that 
the solution to \eqref{NLW1}
with initial data 
$(u_0,u_1) \equiv (0, 1)$
does {\it not} locally linearize
(for fixed $t$), 
which shows a sharp contrast to the case of the stochastic heat equation.
In this note, we change our viewpoint
and study the local linearization issue for SNLW \eqref{NLW1}
from a {\it bi-parameter} point of view.

\medskip

In 
\cite{Walsh}, Walsh studied the well-posedness issue of \eqref{NLW1}
by first switching to the null coordinates:
\begin{align}
x_1 = \frac{x-t}{\sqrt 2}\qquad \text{and}\qquad 
x_2 = \frac{x+t}{\sqrt 2}.
\label{null1}
\end{align}

\noi
In the null coordinates, 
the Cauchy problem \eqref{NLW1} becomes
\begin{align}
\begin{cases}
\partial_{x_1}\partial_{x_2} v = - \frac 12 F(v)\wt \xi\\
v|_{x_1 = x_2} = u_0(\sqrt 2 \,\cdot\,), 
\quad  
 (\dd_{x_2} - \dd_{x_1}) v|_{x_1 = x_2} = \sqrt 2 u_1(\sqrt 2\,\cdot\,), 
\end{cases}
\label{SNLW1a}
 \end{align}

\noi
where 
\begin{align}
v(x_1, x_2) = u \bigg(\frac{-x_1 + x_2}{\sqrt 2},  \frac{x_1 + x_2}{\sqrt 2}\bigg)
\quad \text{and}
\quad 
\wt \xi (x_1, x_2) = \xi \bigg(\frac{-x_1 + x_2}{\sqrt 2},  \frac{x_1 + x_2}{\sqrt 2}\bigg)
\label{null2}
\end{align}

\noi
with the latter interpreted in a suitable sense.
Note that this change of coordinates is via an orthogonal transformation
(which in particular preserves the $L^2$-inner product on $\R^2$)
and thus $\wt \xi$ is also a two-parameter white noise
in the sense of Definition \ref{DEF:white}.

By integrating in $x_1$ and $x_2$, we can rewrite \eqref{SNLW1a}
as
\begin{align}
v(\xx) 
& = V_0(\xx)  + \frac 12 \int_{x_1}^{x_2} \int_{x_1}^{y_2} F(v(\yy)) \wt \xi (dy_1, dy_2), 
\label{SNLW1b}
\end{align}

\noi
where  $\xx = (x_1, x_2)$,  $\yy = (y_1, y_2)$, 
and
\begin{align}
V_0(\xx) 
& = \frac 12 \Big(u_0(\sqrt 2 x_1) + u_0(\sqrt 2 x_2)\Big)
+ \frac 12 \int_{\sqrt 2 x_1}^{\sqrt 2 x_2} 
u_1(y) dy.
\label{V0}
\end{align}

\noi
Under the  Lipschitz assumption on $F$, 
one can then interpret the last term on the right-hand side of \eqref{SNLW1b}
as a two-parameter stochastic integral (\cite{Cai1, Cai2}) and prove  well-posedness of~\eqref{SNLW1b} (and hence of 
the original SNLW \eqref{NLW1});
see
\cite{Walsh, Dalang}.

In the following, we study the local linearization property 
of the solution $v$ to \eqref{SNLW1a}
in the variable 
$\xx = (x_1, x_2)$ in a bi-parameter manner.
For this purpose, let us introduce some notations.
Let $\wt Z$ be the linearization of $v$ in \eqref{SNLW1a}; namely, 
$\wt Z$ is the solution to \eqref{SNLW1a}
with $F(v) \equiv 1$ and $(u_0, u_1) = (0, 0)$:
\begin{equation*}
\begin{cases}
\dd_{x_1} \dd_{x_2} \wt Z = -\frac 12  \wt \xi\\
\wt Z|_{x_1 = x_2} = 0, 
\quad  
 (\dd_{x_2} - \dd_{x_1}) \wt Z|_{x_1 = x_2} = 0.
\end{cases}
\end{equation*}

\noi
By a direction integration, we then have
\begin{align}
\wt Z(\xx)
=  \frac 12 \int_{x_1}^{x_2} \int_{x_1}^{y_2}  \wt \xi (dy_1, dy_2)
\label{SNLW3}
\end{align}

\noi
which is to be interpreted as 
a two-parameter stochastic integral.

Given $\eps \in \R$, 
define the difference operator
$\dl_{\eps}^{(j)}$, $j = 1, 2$,  by setting 
\begin{align}
\begin{split}
\dl^{(1)}_{\eps} f(x_1, x_2) = f(x_1 + \eps, x_2) - f(x_1, x_2),\\
\dl^{(2)}_{\eps} f(x_1, x_2) = f(x_1 , x_2+ \eps)- f(x_1, x_2).
\end{split}
\label{Lip2}
\end{align}

\noi
Then, from \eqref{Lip2} and  \eqref{SNLW1b}, we have
\begin{align}
\begin{split}
\dl_{\pm \eps}^{(1)}\dl_\eps^{(2)} v(\xx)
& =  v(x_1\pm\eps, x_2+\eps)
- v(x_1\pm\eps,x_2)
- v(x_1,x_2+\eps)
+ v(x_1,x_2) \\
& = \dl_{\pm \eps}^{(1)}\dl_\eps^{(2)} V_0(\xx)
  - \frac 12 \int_{x_2}^{x_2+\eps} \int_{x_1}^{x_1\pm \eps} F(v(\yy))  \wt \xi (dy_1, dy_2).
\end{split}
\label{Z1}
\end{align}

\noi
Similarly, 
 from  \eqref{SNLW3}, we have
\begin{align}
\begin{split}
\dl_{\pm\eps}^{(1)}\dl_\eps^{(2)} \wt Z(\xx)
& =  \wt Z(x_1\pm\eps, x_2+\eps)
-\wt  Z(x_1\pm\eps,x_2)
- \wt Z(x_1,x_2+\eps)
+ \wt Z(x_1,x_2) \\
& =   - \frac 12 \int_{x_2}^{x_2+\eps} \int_{x_1}^{x_1\pm \eps}  \wt \xi (dy_1, dy_2).
\end{split}
\label{Z2}
\end{align}

\noi
Thus, 
from the Wiener isometry (see, for example,  \cite[(20) on p.\,7]{Kho09}), 
we have
\begin{align}
 \E\big[|\dl_{\pm \eps}^{(1)}\dl_\eps^{(2)}\wt   Z(\xx)|^2\big]
= \frac 14 \eps^2, 
\label{Z3}
\end{align}

\noi
which shows that 
the decay rate of $|\dl_{\pm \eps}^{(1)}\dl_\eps^{(2)} \wt  Z(\xx)|$ is $\sim |\eps|$
on average.
The following  lemma shows that  the decay rate (as $\eps \to 0$)
of $|\dl_{\pm \eps}^{(1)}\dl_\eps^{(2)} \wt Z(\xx)|$ 
is almost surely slower than $|\eps|^{1+\kappa}$ for any $\kappa > 0$.

\begin{lemma}\label{LEM:decay}
Fix $\xx = (x_1, x_2) \in \R^2$.
Then, for any $\kappa > 0$, we have 
\begin{align}
\limsup_{\eps \to 0} \frac{|\dl_{\pm \eps}^{(1)}\dl_\eps^{(2)} \wt  Z(\xx)|}{|\eps|^{1+\kappa}}= \infty,
\label{decay1}
\end{align}

\noi
almost surely.

\end{lemma}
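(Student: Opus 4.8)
The plan is to deduce \eqref{decay1} from the second Borel--Cantelli lemma, applied to a family of \emph{independent} Gaussian random variables extracted from increments of $\wt Z$ over disjoint rectangles. Fix $\xx = (x_1, x_2)$. Since the $\limsup$ over all $\eps \to 0$ is bounded below by the $\limsup$ along any sequence tending to $0$, it suffices to exhibit a single sequence $\eps_n \downarrow 0$ along which the ratio in \eqref{decay1} is unbounded; moreover we may treat $\eps > 0$ with the upper sign only, the lower sign being identical. Writing $R_\eps = [x_1, x_1 + \eps] \times [x_2, x_2 + \eps]$, the identity \eqref{Z2} reads $\dl_\eps^{(1)} \dl_\eps^{(2)} \wt Z(\xx) = -\tfrac 12 \wt\xi(\ind_{R_\eps})$, which by the Wiener isometry is a centered Gaussian of variance $\tfrac14 \eps^2$, recovering \eqref{Z3}. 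The rectangles $R_\eps$ are nested as $\eps$ varies, so the variables $\{\wt\xi(\ind_{R_\eps})\}_{\eps > 0}$ are strongly correlated; consequently a Borel--Cantelli argument cannot be applied directly to the events $\{|\wt\xi(\ind_{R_\eps})| \gg \eps^{1+\kappa}\}$. This correlation is the main obstacle, and its removal is the crux of the argument.

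To decorrelate, I would pass to annular increments. Set $\eps_n = 2^{-n}$ and let $D_n = R_{\eps_n} \setminus R_{\eps_{n+1}}$ be the resulting $L$-shaped region; the $D_n$ are pairwise disjoint. Hence the variables $\eta_n := \wt\xi(\ind_{D_n})$ are jointly Gaussian with $\E[\eta_n \eta_m] = \jb{\ind_{D_n}, \ind_{D_m}}_{L^2(\R^2)} = 0$ for $n \neq m$ by Definition~\ref{DEF:white}, and are therefore independent, with variance $\s_n^2 = |D_n| = \eps_n^2 - \eps_{n+1}^2 = \tfrac34 \eps_n^2$. By linearity of the stochastic integral,
\[
\dl_{\eps_n}^{(1)} \dl_{\eps_n}^{(2)} \wt Z(\xx) - \dl_{\eps_{n+1}}^{(1)} \dl_{\eps_{n+1}}^{(2)} \wt Z(\xx)
= -\tfrac12\big(\wt\xi(\ind_{R_{\eps_n}}) - \wt\xi(\ind_{R_{\eps_{n+1}}})\big)
= -\tfrac12 \eta_n .
\]
Consider the events $E_n = \{|\eta_n| > \s_n\}$. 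As $\eta_n/\s_n$ is standard normal, $\P(E_n) = \P(|N| > 1) =: c_0 > 0$ is a positive constant independent of $n$, so $\sum_n \P(E_n) = \infty$; since the $E_n$ are independent, the second Borel--Cantelli lemma yields that, almost surely, $|\eta_n| > \s_n = \tfrac{\sqrt3}{2}\eps_n$ for infinitely many $n$.

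Finally, I would convert this into the claimed blow-up. On the event $|\eta_n| > \tfrac{\sqrt3}{2}\eps_n$, the displayed increment identity together with the triangle inequality forces
\[
\max\Big(\big|\dl_{\eps_n}^{(1)} \dl_{\eps_n}^{(2)} \wt Z(\xx)\big|,\ \big|\dl_{\eps_{n+1}}^{(1)} \dl_{\eps_{n+1}}^{(2)} \wt Z(\xx)\big|\Big) \ge \tfrac14 |\eta_n| > \tfrac{\sqrt3}{8}\,\eps_n .
\]
If the maximum is attained at $\eps_n$, dividing by $\eps_n^{1+\kappa}$ produces a ratio $\gtrsim \eps_n^{-\kappa}$; if it is attained at $\eps_{n+1}$, then, since $\eps_n \ge \eps_{n+1}$, dividing by $\eps_{n+1}^{1+\kappa}$ produces a ratio $\gtrsim \eps_n\,\eps_{n+1}^{-1-\kappa} \ge \eps_{n+1}^{-\kappa}$. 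In either case, for infinitely many $n$ there is a scale $\eps \in \{\eps_n, \eps_{n+1}\}$, tending to $0$, at which the ratio in \eqref{decay1} is $\gtrsim \eps^{-\kappa} \to \infty$. This gives \eqref{decay1} almost surely. The only delicate point is the decorrelation via the independent increments $\eta_n$; once these are available, the remaining estimates are elementary.
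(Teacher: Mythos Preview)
Your argument is correct, but it takes a detour that the paper avoids. You identify correlation among the $\wt\xi(\ind_{R_\eps})$ as ``the main obstacle'' and resolve it by passing to independent annular increments $\eta_n$ so as to invoke the \emph{second} Borel--Cantelli lemma on the events $\{|\eta_n| > \s_n\}$. The paper instead applies the \emph{first} Borel--Cantelli lemma to the complementary type of event: since $\dl_{\eps}^{(1)}\dl_{\eps}^{(2)}\wt Z(\xx)$ is Gaussian with variance $\tfrac14\eps^2$, one has the small-ball estimate $\P\big(|\dl_{\eps}^{(1)}\dl_{\eps}^{(2)}\wt Z(\xx)| \le \sqrt{M}\,\eps^{1+\kappa}\big) \lesssim \sqrt{M}\,\eps^{\kappa}$, which is summable along $\eps_n = e^{-n}$; hence almost surely $|\dl_{\eps_n}^{(1)}\dl_{\eps_n}^{(2)}\wt Z(\xx)| > \sqrt{M}\,\eps_n^{1+\kappa}$ for all large $n$, and letting $M\to\infty$ gives \eqref{decay1}. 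No independence is needed for this direction of Borel--Cantelli, so the decorrelation step is unnecessary here. Your approach is still valid and would be the natural one in settings where small-ball probabilities are not summable (e.g.\ for distributions without a bounded density at the origin), but in the Gaussian case the paper's route is shorter and yields the slightly stronger conclusion that the ratio exceeds any fixed $\sqrt{M}$ for \emph{all} sufficiently small $\eps_n$ along the chosen sequence, rather than merely infinitely often.
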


Lemma \ref{LEM:decay} follows from a simple application of  the Borel-Cantelli lemma.
We present the proof of Lemma \ref{LEM:decay}
in the next section.

\medskip

Let us now turn to the local linearization property
of solutions to SNLW \eqref{NLW1}.
In~\cite{HK}, 
Khoshnevisan and the first author investigated
the local linearization issue for SNLW \eqref{NLW1}
by studying
local
fluctuations (in $x$) of $u(t)$ 
for fixed $t$.  
While such an approach is suitable for the stochastic heat equation \eqref{NLH1}, 
it does not seem to be appropriate for the wave equation.
We instead propose to study 
{\it bi-parameter}  fluctuations of $u$ 
with respect to the null coordinates 
$x_1 = \frac{x-t}{\sqrt 2}$ and $x_2 = \frac{x+t}{\sqrt 2}$.
For this purpose, 
let us first state the local linearization  result
for SNLW~\eqref{SNLW1a} in the null coordinates.
Given $(x_1, x_2) \in \R^2$ and (small) $\eps \in \R$,  define 
the remainder terms $\wt R_\eps^+ (x_1, x_2)$ and $\wt R_\eps^- (x_1, x_2)$ by setting
\begin{align}
\begin{split}
\wt  R_\eps^\pm (x_1, x_2)
& = \dl_{\pm \eps}^{(1)}\dl_\eps^{(2)}v(x_1, x_2)
- F(v(x_1, x_2))
 \dl_{\pm \eps}^{(1)}\dl_\eps^{(2)}\wt  Z(x_1, x_2)\\
& =  \{ v(x_1\pm \eps, x_2+\eps)
- v(x_1\pm \eps,x_2)
- v(x_1,x_2+\eps)
+ v(x_1,x_2) \}
\\
&\quad
- F(v(x_1,x_2))
\{ \wt Z(x_1\pm \eps, x_2+\eps)
- \wt Z(x_1\pm \eps,x_2)\\
& \hphantom{XXXXXXXXX}
- \wt Z(x_1,x_2+\eps)
+ \wt Z(x_1,x_2) \}. 
\end{split}
\label{Z4}
\end{align}

\begin{theorem}
\label{THM:loc}
Given $u_0 \in C^1_b(\R)$ and $u_1 \in C_b(\R)$, 
let $v$ be the solution to SNLW \eqref{SNLW1a}
in the null coordinates 
and $\wt Z$ be as in \eqref{SNLW3}.
Then, 
given any 
 $\xx =  (x_1, x_2) \in \R^2$
and finite $p \ge 2$, 
we have
\begin{align}
\|\wt  R_\eps^\pm (\xx)
 \|_{L^p(\O)}
\le C(p, x_1, x_2) \eps^{\frac 32}, 
\label{Z5}
\end{align}

\noi
uniformly in 
small $\eps > 0$.

\end{theorem}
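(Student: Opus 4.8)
The plan is to reduce \eqref{Z5} to a single two-parameter stochastic integral with a small integrand and then to exploit the H\"older regularity of $v$. First I would note that the deterministic contribution drops out. Writing $U$ for a primitive of $u_1$, formula \eqref{V0} shows that $V_0$ splits as $V_0(x_1, x_2) = g(x_1) + h(x_2)$ into a function of $x_1$ plus a function of $x_2$, and for any such sum the mixed second difference vanishes, $\dl_{\pm\eps}^{(1)}\dl_\eps^{(2)}V_0 \equiv 0$, since $\dl_\eps^{(2)}V_0(x_1, x_2) = h(x_2+\eps) - h(x_2)$ does not depend on $x_1$. Substituting this together with \eqref{Z1} and \eqref{Z2} into the definition \eqref{Z4}, and noting that $F(v(\xx))$ is independent of the white noise on the rectangle $[x_1, x_1\pm\eps]\times[x_2, x_2+\eps]$ --- which, up to a null set, is disjoint from the triangle $\{x_1 \le y_1 \le y_2 \le x_2\}$ on which $v(\xx)$ depends --- so that it may be brought inside the integral, I arrive at
\[
\wt R_\eps^\pm(\xx) = -\frac12 \int_{x_2}^{x_2+\eps}\int_{x_1}^{x_1\pm\eps}\big[F(v(\yy)) - F(v(\xx))\big]\,\wt\xi(dy_1, dy_2).
\]

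I would then bound the $L^p(\O)$ norm of this integral by the Burkholder--Davis--Gundy-type moment inequality for two-parameter stochastic integrals (\cite{Cai1, Cai2, Kho09}) followed by Minkowski's integral inequality (legitimate since $p \ge 2$):
\[
\|\wt R_\eps^\pm(\xx)\|_{L^p(\O)} \les_p \bigg(\int_{x_2}^{x_2+\eps}\int_{x_1}^{x_1\pm\eps}\big\|F(v(\yy)) - F(v(\xx))\big\|_{L^p(\O)}^2\,dy_1\,dy_2\bigg)^{\frac12}.
\]
Since $F$ is Lipschitz, $\|F(v(\yy)) - F(v(\xx))\|_{L^p(\O)} \les \|v(\yy) - v(\xx)\|_{L^p(\O)}$, so matters reduce to controlling the increments of $v$ in $L^p(\O)$.

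The core is therefore the H\"older-$\tfrac12$ bound $\|v(\yy) - v(\xx)\|_{L^p(\O)} \les_{p,\xx} |\yy - \xx|^{\frac12}$ for $\yy$ in a fixed compact neighborhood $K$ of $\xx$. To establish it I would first invoke the a priori moment bound $\sup_{\yy \in K}\|v(\yy)\|_{L^p(\O)} < \infty$ (a consequence of the well-posedness theory; see Appendix~\ref{SEC:A}), which together with the hypotheses on $F$, $u_0$, $u_1$ gives $\sup_{\yy \in K}\|F(v(\yy))\|_{L^p(\O)} < \infty$. Splitting $v(\yy) - v(\xx)$ into its two one-coordinate increments and subtracting the corresponding representations \eqref{SNLW1b}, each increment becomes a stochastic integral over a thin region obtained by displacing one endpoint by $h := |\yy - \xx|$; a direct computation shows these regions have area $O_K(h)$ (a strip of width $h$ and bounded length, together with, in one case, a corner triangle of area $O(h^2)$). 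The same BDG and Minkowski estimate, with the uniform bound on $\|F(v)\|_{L^p(\O)}$, then controls each stochastic increment by $O_K(h^{1/2})$, while the deterministic increments of $V_0$ are $O(h)$ since $u_0 \in C^1_b$ and $u_1 \in C_b$; this yields the claim.

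Finally, inserting this into the reduced estimate and using that $|\yy - \xx| \les \eps$ on the rectangle, so that $\|v(\yy) - v(\xx)\|_{L^p(\O)}^2 \les_\xx \eps$, and integrating over the rectangle of area $\eps^2$, I obtain a factor $\eps^3$ whose square root gives $\|\wt R_\eps^\pm(\xx)\|_{L^p(\O)} \les C(p, x_1, x_2)\,\eps^{3/2}$, as desired. I expect the H\"older-$\tfrac12$ regularity --- and especially the a priori moment bound on $v$ --- to be the main obstacle: the null-coordinate (triangular) structure of \eqref{SNLW1b} makes the increment domains non-rectangular, and one must track the two-parameter filtration carefully to legitimately apply the BDG inequality. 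The bookkeeping of these regions, rather than any single hard inequality, is where the real work lies.
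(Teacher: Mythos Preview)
Your overall strategy matches the paper's: the $V_0$ contribution vanishes, one reduces to a stochastic integral of $F(v(\yy))-F(v(\xx))$ over the small rectangle, and Lipschitz continuity of $F$ together with the H\"older-$\tfrac12$ moment bound on $v$ (Proposition~\ref{PROP:Hol1}) yields $\eps^{3/2}$. The gap is in the justification of the BDG step. Your independence observation is correct --- the dependence triangle of $v(\xx)$ meets the rectangle only in a null set --- but independence of $F(v(\xx))$ from $\wt\xi$ on the rectangle is \emph{not} the same as predictability of the combined integrand $F(v(\yy))-F(v(\xx))$. There is no useful two-parameter filtration here (the solution $v$ depends on the noise over a triangle, not a quadrant, so the Cairoli--Walsh framework you cite does not apply directly); in the natural one-parameter filtration $\{\mathcal F_t\}$, the rectangle becomes, in $(t,x)$-coordinates, a diamond straddling the time level $t_0=\tfrac{-x_1+x_2}{\sqrt 2}$, and on the \emph{past} half the ``constant'' $F(v(\xx))=F(u(t_0,x_0))$ is not $\mathcal F_t$-measurable. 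Splitting into $\int F(v(\yy))\,\wt\xi - F(v(\xx))\int\wt\xi$ and estimating the two pieces separately destroys the cancellation and gives only $O(\eps)$, not $O(\eps^{3/2})$.

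The paper resolves this by passing to $(t,x)$-coordinates, cutting the diamond at $t=t_0$ into triangles $D_1$ (past) and $D_2$ (future), and on $D_1$ recentering at the earliest-time vertex $P_2$: one writes $F(u)-F(u(P_1))=\big[F(u)-F(u(P_2))\big]+\big[F(u(P_2))-F(u(P_1))\big]$. The first bracket is now genuinely adapted on $D_1$, so BDG plus \eqref{vdif1} gives $\eps^{3/2}$; the second is a fixed random factor times the Wiener integral $\int_{D_1}\xi$, handled by Cauchy--Schwarz in $\o$ (giving $\eps^{1/2}\cdot\eps=\eps^{3/2}$). On $D_2$ the original integrand is already adapted. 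This recentering trick is precisely the ``bookkeeping'' you flagged as the real work, and it is the main technical point of the proof; your proposal, as written, does not supply it.
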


Theorem \ref{THM:loc}
establishes bi-parameter local linearization
for the solution $v$ to \eqref{SNLW1a}
in the following sense; 
the remainder term $\wt  R_\eps^\pm (x_1, x_2)$ decays like $\sim \eps^\frac 32$ 
as $\eps \to 0$ on average, 
and hence, 
in view of Lemma \ref{LEM:decay}, 
$\wt  R_\eps^\pm (x_1, x_2)$ tends to 0 much faster than 
$\dl_{\pm \eps}^{(1)}\dl_\eps^{(2)}\wt Z(x_1, x_2)$.

As an immediate corollary to Theorem \ref{THM:loc}
and \eqref{null2}, 
we obtain the following 
 bi-parameter local linearization 
for the solution $u$ to SNLW \eqref{NLW1} in the original space-time coordinates.

\begin{theorem}\label{THM:2}

Given $u_0 \in C^1_b(\R)$ and $u_1 \in C_b(\R)$, 
let $u$ be the solution to SNLW \eqref{NLW1}
and $Z$ be the linear solution, satisfying
\begin{equation}
\begin{cases}
\dt^2 Z- \dx^2 Z =  \xi\\
(Z,\dt Z) |_{t = 0} = (0,0).
\end{cases}
\label{NLW2}
\end{equation}

\noi
Then, given any 
$(t, x) \in \R^2$ and 
finite $p \ge 2$, we have 
\begin{align*}
& \| \Dl^{(1)}_\eps u(t, x)
- F(u(t, x)) \Dl^{(1)}_\eps Z(t, x)
 \|_{L^p(\O)}\\
&\quad  + \| \Dl^{(2)}_\eps u(t, x)
-  F(u(t, x)) \Dl^{(2)}_\eps Z(t, x)
 \|_{L^p(\O)}
 \leq C(p, t, x) \eps^\frac{3}{2}, 
\end{align*}

\noi
uniformly in small $\eps > 0$, 
where
$\Dl^{(1)}_\eps$ and $\Dl^{(2)}_\eps$
are defined by 
\begin{align*}
\Dl^{(1)}_\eps f(t, x) 
& = f(t, x+2\eps) - f(t-\eps, x+\eps)
- f(t+\eps, x+\eps) + f(t, x), \\
\Dl^{(2)}_\eps f(t, x) 
& =f(t + 2\eps, x) - f(t+\eps, x-\eps)
- f(t+\eps, x+\eps) + f(t, x).
\end{align*}

\end{theorem}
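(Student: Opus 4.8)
The plan is to deduce Theorem \ref{THM:2} from Theorem \ref{THM:loc} by a direct change of variables, since the two statements live on the same probability space and are related only through the null-coordinate transformation \eqref{null1}.

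First I would record the correspondence between the linear solutions. By \eqref{null2} applied with $F \equiv 1$ and $(u_0, u_1) = (0,0)$ — equivalently, since \eqref{null1} turns $\dt^2 - \dx^2$ into $-2\dd_{x_1}\dd_{x_2}$ and $\xi$ into $\wt\xi$, so that the Cauchy problem \eqref{NLW2} for $Z$ becomes precisely the defining problem for $\wt Z$ and uniqueness applies — one has, with $(x_1, x_2) = \big(\tfrac{x-t}{\sqrt 2}, \tfrac{x+t}{\sqrt 2}\big)$ the null coordinates of $(t,x)$,
\[
u(t, x) = v(x_1, x_2), \qquad Z(t, x) = \wt Z(x_1, x_2), \qquad F(u(t,x)) = F(v(x_1, x_2)).
\]

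Next, set $\eta = \sqrt 2\, \eps$ and compute, via \eqref{null1}, the images of the four space-time points appearing in $\Dl^{(1)}_\eps f(t,x)$ and in $\Dl^{(2)}_\eps f(t,x)$. A short computation shows that the four points of $\Dl^{(1)}_\eps$ map to $(x_1+\eta, x_2+\eta)$, $(x_1+\eta, x_2)$, $(x_1, x_2+\eta)$, $(x_1, x_2)$, while those of $\Dl^{(2)}_\eps$ map to $(x_1-\eta, x_2+\eta)$, $(x_1-\eta, x_2)$, $(x_1, x_2+\eta)$, $(x_1, x_2)$. Comparing with the mixed second differences in \eqref{Lip2}, this yields
\[
\Dl^{(1)}_\eps u(t,x) = \dl^{(1)}_{\eta}\dl^{(2)}_\eta v(x_1, x_2), \qquad \Dl^{(1)}_\eps Z(t,x) = \dl^{(1)}_{\eta}\dl^{(2)}_\eta \wt Z(x_1, x_2),
\]
\[
\Dl^{(2)}_\eps u(t,x) = \dl^{(1)}_{-\eta}\dl^{(2)}_\eta v(x_1, x_2), \qquad \Dl^{(2)}_\eps Z(t,x) = \dl^{(1)}_{-\eta}\dl^{(2)}_\eta \wt Z(x_1, x_2).
\]
Substituting these together with the correspondence above, the two expressions inside the $L^p(\O)$-norms in Theorem \ref{THM:2} become exactly $\wt R^+_\eta(x_1, x_2)$ and $\wt R^-_\eta(x_1, x_2)$ from \eqref{Z4}.

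Finally I would invoke Theorem \ref{THM:loc}, which gives $\|\wt R^\pm_\eta(x_1, x_2)\|_{L^p(\O)} \le C(p, x_1, x_2)\,\eta^{3/2}$ for small $\eta > 0$; since $\eta = \sqrt 2\, \eps$ we have $\eta^{3/2} = 2^{3/4}\eps^{3/2}$, and since $(x_1, x_2)$ is determined by $(t,x)$ the constant may be rewritten as $C(p, t, x)$. Summing the two contributions then proves the claim uniformly in small $\eps > 0$. The only genuine work is the bookkeeping in the second step — verifying that the eight prescribed space-time increments collapse, under \eqref{null1} and the rescaling $\eta = \sqrt 2\,\eps$, onto the two mixed rectangular second differences $\dl^{(1)}_{\pm\eta}\dl^{(2)}_\eta$. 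This is elementary but sign- and scale-sensitive; once it is in place, Theorem \ref{THM:loc} supplies all the analytic content, which is why the result is genuinely an immediate corollary.
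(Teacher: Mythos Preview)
Your proposal is correct and is exactly the approach the paper intends: the paper states Theorem~\ref{THM:2} as ``an immediate corollary to Theorem~\ref{THM:loc} and \eqref{null2}'' without writing out a proof, and your change-of-variables computation (with $\eta=\sqrt2\,\eps$) supplies precisely the omitted bookkeeping. The identification of the eight space-time points with the corners of the rectangles for $\dl^{(1)}_{\pm\eta}\dl^{(2)}_\eta$ is correct as written.
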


We also have the following claim as
a direct corollary to 
Lemma \ref{LEM:decay} and \eqref{null2};
given  any $\kappa > 0$ and $(t, x) \in \R^2$, we have 
\begin{align}
\limsup_{\eps \to 0} \frac{| \Dl^{(1)}_\eps  Z(t, x)|}{|\eps|^{1+\kappa}}
= \limsup_{\eps \to 0} \frac{| \Dl^{(2)}_\eps  Z(t, x)|}{|\eps|^{1+\kappa}}
= 
\infty,
\label{decay5}
\end{align}

\noi
almost surely.
Hence, from Theorem \ref{THM:2} and \eqref{decay5}, 
we have
\begin{align*}
 \Dl^{(1)}_\eps u(t, x)
& = 
 F(u(t, x)) \Dl^{(1)}_\eps Z(t, x) + R^{(1)}_\eps(t, x),\\
 \Dl^{(2)}_\eps u(t, x)
& =  F(u(t, x)) \Dl^{(2)}_\eps Z(t, x)
+ R^{(2)}_\eps(t, x), 
\end{align*}

\noi
where the remainder term $R^{(j)}_\eps(t, x)$
decays much faster than 
$\Dl^{(j)}_\eps Z(t, x)$,  $j = 1, 2$, 
thus 
establishing a bi-parameter local linearization 
for the solution $u$ to SNLW \eqref{NLW1} in the original space-time coordinates.

\begin{remark}\rm
In Theorems \ref{THM:loc} and \ref{THM:2}, 
we established local linearizability of SNLW
in a bi-parameter sense.
Such a bi-parameter point of view is natural 
in studying the one-dimensional (stochastic) wave equation.
See, for example, 
\cite{QT, CG,    BLS}  
and  the references therein.

\end{remark}

\begin{remark}\rm
(i) 
If $f$ is a smooth function, we have
\begin{align*}
\Dl^{(1)}_\eps f = \eps^2( - \dt^2 + \dx^2) f + O(\eps^3)
\quad
\text{and}\quad 
\Dl^{(2)}_\eps f = \eps^2(  \dt^2 - \dx^2) f + O(\eps^3).
\end{align*}

\noi
Thus, if both the solution $u$ to \eqref{NLW1}
and $Z$ satisfying \eqref{NLW2} {\it were} smooth (in both $t$ and $x$),
then we  would formally have
\begin{align*}
 \Dl^{(1)}_\eps u
- 
 F(u) \Dl^{(1)}_\eps Z
= O(\eps^3)
\quad
\text{and}\quad 
 \Dl^{(2)}_\eps u
- 
 F(u) \Dl^{(2)}_\eps Z
= O(\eps^3).
\end{align*}

\noi
The main point of Theorem \ref{THM:2}
is to justify such heuristics
when both $u$ and $Z$ are non-smooth functions.

\smallskip

\noi
(ii)
As shown in \cite{HK}, 
local linearization in $x$ (for fixed $t$)
fails for \eqref{NLW1}.
By switching the role of $t$ and $x$, 
we also see that 
local linearization in $t$ (for fixed $x$)
fails for \eqref{NLW1}.

One may also study
local linearization properties
in $x_1$ (for fixed $x_2$)
of solutions to \eqref{SNLW1a} in the null coordinates.
Let $x_2 > x_1$.
From \eqref{SNLW1b} and \eqref{SNLW3}, we have
\begin{align}
\begin{split}
 \dl_{\eps}^{(1)}& v(\xx) 
- F(v(\xx))
\dl_\eps^{(1)}\wt  Z(\xx)\\
& = \dl_{\eps}^{(j)}V_0(\xx)  
-  \frac 12 \int_{x_1+\eps}^{x_2} \int_{x_1}^{x_2+\eps} 
\big\{F(v(\yy)) - F(v(\xx))\big\} \wt \xi (dy_1, dy_2)\\
& \quad 
-  \frac 12 \int_{x_1}^{x_1+\eps} \int_{x_1}^{y_2} 
\big\{F(v(\yy)) - F(v(\xx))\big\}\wt \xi (dy_1, dy_2).
\end{split}
\label{P5}
\end{align}

\noi
On the other hand, 
from the Lipschitz continuity of $F$
and
\eqref{vdif1}, we have 
\begin{equation*}
|F(v(\yy)) - F(v(\xx))|^2
\le C(\o)\big(
|x_1-y_1|
+ |x_2-y_2|\big), 
\end{equation*}

\noi
which is $O(1)$ in the domain of integration 
for 
the second term on the right-hand side of~\eqref{P5}
since $|x_2-y_2| \sim |x_2-x_1| = O(1)$.
Namely, the second term 
on the right-hand side of~\eqref{P5}
does not decay faster than 
$\dl_\eps^{(1)}\wt  Z$ in general, and hence local linearization
in $x_1$ (for fixed $x_2$)
fails
for \eqref{SNLW1a}.
By symmetry, 
we also see that 
local linearization
in $x_2$ (for fixed $x_1$)
fails for~\eqref{SNLW1a}.
This is the reason that we need to consider the second
order difference in studying 
local linearization for SNLW.
\end{remark}

\section{Proofs of Lemma \ref{LEM:decay} and Theorem \ref{THM:loc}}

In this section, we present the proofs of 
Lemma \ref{LEM:decay} and Theorem \ref{THM:loc}.
We first prove Lemma \ref{LEM:decay} on a lower bound of the decay rate
of $|\dl_{\pm \eps}^{(1)}\dl_\eps^{(2)} \wt Z(\xx)|$ as $\eps \to 0$.

\begin{proof}[Proof of Lemma \ref{LEM:decay}]

We only consider
$\dl_{\eps}^{(1)}\dl_\eps^{(2)} \wt  Z(\xx)$
since 
the same proof applies to $\dl_{-\eps}^{(1)}\dl_\eps^{(2)}\wt  Z(\xx)$.

Fix $\kappa > 0$ and $\xx = (x_1, x_2) \in \R^2$.
Recalling from \eqref{Z2} and~\eqref{Z3} that 
$\dl_\eps^{(1)}\dl_\eps^{(2)} \wt  Z(\xx)$ is a mean-zero Gaussian random variable
with variance $\frac 14 \eps^2$, we have  
\begin{equation}\label{Cr5'}
P \left(|\delta_{\epsilon}^{(1)} \delta_{\epsilon}^{(2)} \wt  {Z}(\xx)|^2 \leq M \varepsilon ^{2+2\kappa}\right) \leq \frac{1}{\sqrt{2\pi}} \int_{|z|\leq \sqrt{4M \eps^{2\kappa}}} e^{-\frac 12 z^2} dz
\leq \sqrt{\frac{8 M}{\pi}} \varepsilon^{\kappa}
\end{equation}

\noi
for any  $M, \eps>0$, uniformly in $\xx  \in \R^2$, 
where we simply bounded $e^{-\frac 12 z^2}$ by $1$.
Given $n \in \N$, let $\eps_n = e^{-n}$ which tends to $0$ as $n \to \infty$.
Then, from \eqref{Cr5'}, we have 
\begin{align*}
\sum_{n =1}^\infty 
P\Big(|\dl_{\eps_n}^{(1)}\dl_{\eps_n}^{(2)} \wt  Z(\xx)|^2 \le M \eps_n^{2+2\kappa}\Big)
\leq C_M \sum_{n =1}^\infty e^{- \kappa n}
< \infty.
\end{align*}

\noi
Hence, by the Borel-Cantelli lemma, 
there exists an almost surely finite constant $N(\o) >0$ such that 
\begin{align*}
|\dl_{\eps_n}^{(1)}\dl_{\eps_n}^{(2)} \wt  Z(\xx)|^2 >  M \eps_n^{2+2\kappa}
\end{align*}

\noi
for any $n \ge N(\o)$.
In particular, we obtain
\begin{align}
\limsup_{\eps \to 0} \frac{|\dl_\eps^{(1)}\dl_\eps^{(2)} \wt  Z(\xx)|}{|\eps|^{1+\kappa}} >\sqrt M, 
\label{Cr6}
\end{align}

\noi
almost surely.
Since \eqref{Cr6} holds for any (integer) $M \gg1$, 
we conclude  \eqref{decay1}.
\end{proof}

Next, we present the proof of Theorem \ref{THM:loc}.

\begin{proof}[Proof of Theorem \ref{THM:loc}]
We only consider
$\wt  R_\eps^+ $
since 
the same proof applies to $\wt  R_\eps^- $.
As before, we use the short-hand notations
 $\xx = (x_1, x_2)$ and $\yy = (y_1, y_2)$.

We first  recall the H\"older continuity of the solution $v$ to \eqref{SNLW1b}.
In particular, it follows from  Proposition \ref{PROP:Hol1} and \eqref{null2} that, 
for any $L>0$ and $2 \le p < \infty$,
we have
\begin{equation}
\E \big[ |v(\xx) - v(\yy)|^p \big]
\les
|x_1-y_1|^{\frac p2}
+ |x_2-y_2|^{\frac p2}, 
\label{vdif1}
\end{equation}
uniformly for $\xx, \yy \in  \R^2$ with
$|x_1|, |x_2|, |y_1|, |y_2| \le L$.
By taking $p \gg 1$
and applying the Kolmogorov continuity criterion
\cite[Theorem 2.1]{Baldi}, 
we see that $v$ is $\al$-H\"older continuous for any $\al < \frac 12$, 
almost surely.

From 
\eqref{Z4} with 
\eqref{Z1} and \eqref{Z2}, we have
\begin{align}
\begin{split}
\wt  R_\eps^+ (x_1, x_2)
& = \dl_{\eps}^{(1)}\dl_\eps^{(2)} V_0(\xx)
  - \frac 12 \int_{x_2}^{x_2+\eps} \int_{x_1}^{x_1+ \eps} 
\big\{  F(v(\yy)) -   F(v(\xx))\big\}  \wt \xi (dy_1, dy_2)\\
& =: \I(\xx) + \II(\xx),  
\end{split}
\label{X1}
\end{align}

\noi
where $V_0$ is as in \eqref{V0}.
It is easy to see from \eqref{V0} and \eqref{Lip2} 
that 
\begin{align}
\I(\xx) 
= 0.
\label{X2}
\end{align}

Next, we estimate the term $\II$ in \eqref{X1}.
To ensure  adaptedness of the integrand,
we first decompose the domain of integration 
(which is a square) into two triangular regions;
 see  \cite[p.\,21]{CHKK} for a similar decomposition
 to recover adaptedness.
For fixed $\xx \in \R^2$
and small $\eps > 0$, 
define the sets $D_1(\xx), D_2(\xx) \subset \R^2_{t, x}$
by setting
\begin{align*}
D_1(\xx) &:= \bigg\{ (t,x) \in \R^2 : \,
\frac{-x_1+x_2-\eps}{\sqrt 2} \le t \le \frac{-x_1+x_2}{\sqrt 2}, \\
& \hphantom{XXXXXXXXii}
-t+\sqrt 2 x_2 \le x \le  t+\sqrt 2 x_1+ \sqrt 2\eps \bigg\},
\\
D_2(\xx) &:= \bigg\{ (t,x) \in \R^2 : \,
\frac{-x_1+x_2}{\sqrt 2} \le t \le \frac{-x_1+x_2+\eps}{\sqrt 2}, \\
& \hphantom{XXXXXXXXiii}
t+\sqrt 2 x_1 \le x \le -t+\sqrt 2 x_2+ \sqrt 2\eps \bigg\}.
\end{align*}

\noi
Namely, 
$D_1(\xx)$ is  the triangular region with vertices:
\begin{align}
\begin{split}
P_1(\xx) &:= \bigg( \frac{-x_1+x_2}{\sqrt 2}, \frac{x_1+x_2}{\sqrt 2} \bigg), \\
P_2(\xx) & := \bigg( \frac{-x_1+x_2-\eps}{\sqrt 2}, \frac{x_1+x_2+\eps}{\sqrt 2} \bigg), 
\\
 \text{and} \quad P_3(\xx) &:= \bigg( \frac{-x_1+x_2}{\sqrt 2}, \frac{x_1+x_2+2\eps}{\sqrt 2} \bigg),
\end{split}
\label{P1}
\end{align}

\noi
while $D_2(\xx)$ is  the triangular region  with vertices:
\[
P_1(\xx), \quad 
P_3(\xx), \quad 
\text{and} \quad 
P_4(\xx) := \bigg( \frac{-x_1+x_2+\eps}{\sqrt 2}, \frac{x_1+x_2+\eps}{\sqrt 2} \bigg).
\]

By undoing  the change of variables  \eqref{null2},
we divide $\II$ into three parts as follows:
\begin{align*}
\II (\xx)
&=
- \frac 12
\bigg(
\int_{D_1(\xx)}+ \int_{D_2(\xx)}
\bigg)
\big\{  F(u(t,x))) -   F(u(P_1(\xx))) \big\}  \xi (dt, dx)
\\
&=
- \frac 12
\int_{D_1(\xx)}
\big\{  F(u(t,x)) - F (u (P_2(\xx))) \big\} \xi (dt, dx)
\\
&\quad
- \frac 12
\big\{  F(u(P_2(\xx))) - F (u (P_1(\xx))) \big\}
\int_{D_1(\xx)}
\xi (dt, dx)
\\
&\quad
- \frac 12
\int_{D_2(\xx)}
\big\{  F(u(t,x)) - F (u (P_1(\xx))) \big\} \xi (dt, dx)
\\
&=:
\II_1 (\xx)
+ \II_2 (\xx)
+ \II_3 (\xx).
\end{align*}

\noi
From the Burkholder--Davis--Gundy inequality (\cite[Theorem 5.27]{Kho09}),
 Minkowski's inequality,
the Lipschitz continuity of $F$, and \eqref{vdif1}
with \eqref{P1} and \eqref{null1}, we have
\begin{align}
\begin{split}
\E \big[ |\II_1(\xx)  |^p \big]
&\les
\E \bigg[
\bigg(
\int_{D_1(\xx)}
|F(u(t,x)) - F(u(P_2(\xx)))|^2 dx dt 
\bigg)^{\frac p2} \bigg]
\\
&\les
\bigg(
\int_{D_1(\xx)}
\E \big[ |u(t,x) - u(P_2(\xx))|^p \big]^{\frac 2p}  dx dt
\bigg)^{\frac p2}
\\
&\les
\bigg(
\int_{D_1(\xx)}
\Big( \Big| t -  \frac{-x_1+x_2-\eps}{\sqrt 2} \Big|
+ \Big| x - \frac{x_1+x_2+\eps}{\sqrt 2} \Big| \Big)
dx dt
\bigg)^{\frac p2}
\\
&=
\bigg(
2
\int_{\frac{-x_1+x_2-\eps}{\sqrt 2}}^{\frac{-x_1+x_2}{\sqrt 2}}
\int_{-t+\sqrt 2 x_2}^{\frac{x_1+x_2+\eps}{\sqrt 2}}
\big( t-x + \sqrt 2 (x_1+\eps) \big)
dxdt
\bigg)^{\frac p2}
\\
&=
\bigg(
3
\int_{\frac{-x_1+x_2-\eps}{\sqrt 2}}^{\frac{-x_1+x_2}{\sqrt 2}}
\Big( t+ \frac{x_1-x_2+\eps}{\sqrt 2} \Big)^2
dt
\bigg)^{\frac p2}
\sim
\eps^{\frac 32 p}.
\end{split}
\label{X3}
\end{align}

\noi
A similar calculation yields that
\begin{align}
\E \big[ |\II_3(\xx)  |^p \big]
\les
\eps^{\frac 32 p}.
\label{P2}
\end{align}

\noi
From H\"older's inequality,
the Lipschitz continuity of $F$, and \eqref{vdif1}
with \eqref{P1}, we have
\begin{align}
\begin{split}
\E \big[ |\II_2(\xx)  |^p \big]
&\les
\E \big[ |u(P_2(\xx)) - u(P_1(\xx))|^{2p} \big]^{\frac 12}
\, \E \bigg[ \bigg| \int_{D_1(\xx)} \xi(dt,dx) \bigg|^{2p} \bigg]^{\frac 12}
\\
&\les
\eps^{\frac p2}
\cdot \eps^p
=
\eps^{\frac 32 p}.
\end{split}
\label{P3}
\end{align}

Therefore, the desired bound \eqref{Z5} follows from 
\eqref{X1}, \eqref{X2},   \eqref{X3}, \eqref{P2}, and \eqref{P3}.
\end{proof}

\appendix

\section{On the Cauchy problem for the stochastic wave equation}
\label{SEC:A}

In this appendix, we go over  basic properties
of solutions to \eqref{NLW1}.
Our presentation
follows closely 
that in Section 6 of \cite{Kho09}
on the stochastic heat equation.
In the remaining part of this note, we restrict our attention to positive times (i.e.~$t \ge0$)
for simplicity of the presentation.

Let $G$ be the fundamental solution for the wave equation
defined by 
\begin{equation}
G(t,x) = \frac 12 \cdot \ind_{\{ |x|<t \}} (t,x).
\label{fundw}
\end{equation}

\noi
Then, the Duhamel formulation (= mild formulation) of 
\eqref{NLW1} is given by 
\begin{equation}
\begin{aligned}
u(t,x)
&=
\dt \int_\R G(t,x-y) u_0(y) dy
+ \int_\R G(t,x-y) u_1(y) dy
\\
&\quad
+ \int_0^t \int_\R G(t-s, x-y) F (u(s,y)) \xi(dy ds).
\end{aligned}
\label{NLWA2}
\end{equation}

Given $T>0$ and finite $p \ge 2$, set
\begin{align}
\| u \|_{\Xc_{T, p}}
 = 
\sup_{0 \le t \le T} \sup_{x \in \R} \|u(t,x)\|_{L^p(\O)}.
\label{NLWA2a}
\end{align}

\noi
Then, for $T> 0$, we
 define a solution space $\Xc$ by setting
\begin{align}
\begin{split}
\Xc = \Big\{&  u \text{ on }\R_+\times \R,  \text{ predictable} : \\ 
&  \| u \|_{\Xc_{T, p}} < \infty
 \text{ for any $T> 0$ and finite } p \ge 2
\Big\}.
\end{split}
\label{NLWA2b}
\end{align}

\noi
Then, we have the following well-posedness result.

\begin{proposition}
\label{PROP:A1}
Let $u_0 \in C^1_b(\R)$ and $u_1 \in C_b(\R)$.
Suppose that $F$ is Lipschitz continuous.
Then,
there exists a unique global-in-time solution $u$ to \eqref{NLWA2}, 
belonging to the class $\Xc$.
\end{proposition}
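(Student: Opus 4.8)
The plan is to construct the solution by a standard Picard iteration in the space $\Xc$, following the Walsh theory as presented in \cite{Kho09} for the stochastic heat equation. First I would split the Duhamel formula \eqref{NLWA2} into its deterministic and stochastic parts. Writing
\[
w(t,x) := \dt \int_\R G(t,x-y) u_0(y)\, dy + \int_\R G(t,x-y) u_1(y)\, dy,
\]
the explicit form \eqref{fundw} of the fundamental solution gives the d'Alembert formula $w(t,x) = \frac 12\big(u_0(x+t) + u_0(x-t)\big) + \frac 12 \int_{x-t}^{x+t} u_1(y)\, dy$, so that the hypotheses $u_0 \in C^1_b(\R)$ and $u_1 \in C_b(\R)$ yield $\|w\|_{\Xc_{T,p}} \le \|u_0\|_{L^\infty} + T\|u_1\|_{L^\infty} < \infty$ for every $T > 0$ and $p$. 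I would then define the Picard map $\Phi$ by $\Phi u(t,x) = w(t,x) + \int_0^t\!\int_\R G(t-s,x-y) F(u(s,y))\, \xi(dy\, ds)$ and seek a fixed point in $\Xc$.

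The key analytic input is a moment bound for the stochastic convolution. Applying the Burkholder--Davis--Gundy inequality (\cite[Theorem 5.27]{Kho09}) followed by Minkowski's inequality, I would estimate, for $p \ge 2$,
\[
\Big\| \int_0^t\!\int_\R G(t-s,x-y) H(s,y)\, \xi(dy\, ds)\Big\|_{L^p(\O)}^2 \les \int_0^t\!\int_\R G(t-s,x-y)^2 \, \|H(s,y)\|_{L^p(\O)}^2\, dy\, ds.
\]
The crucial computation is that, by \eqref{fundw}, $\int_\R G(t-s,x-y)^2\, dy = \tfrac 14 \big|\{y : |x-y| < t-s\}\big| = \tfrac 12 (t-s)$, which is finite precisely because of the finite speed of propagation encoded in the compact support of $G$; this plays the role that the $L^2$-in-space heat kernel bound plays in the parabolic case. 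Combined with the linear growth $|F(r)| \le |F(0)| + \mathrm{Lip}(F)\,|r|$ coming from Lipschitz continuity, this shows that $\Phi$ maps $\Xc$ into itself.

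To obtain convergence, I would set $u^{(0)} = w$, $u^{(n+1)} = \Phi u^{(n)}$, and $H_n(t) := \sup_{x \in \R} \|(u^{(n+1)} - u^{(n)})(t,x)\|_{L^p(\O)}^2$. The moment bound above together with the Lipschitz continuity of $F$ yields the recursion $H_n(t) \les \int_0^t (t-s)\, H_{n-1}(s)\, ds \le C_{T} \int_0^t H_{n-1}(s)\, ds$ on $[0,T]$, and iterating produces the factorial gain $H_n(T) \le C_T^{\,n}\, T^n\, H_0(T)/n!$. Since $\sum_n \sqrt{H_n(T)} < \infty$, the iterates are Cauchy in $\Xc_{T,p}$ for every $T$ and $p$, and the limit $u$ is the desired fixed point, with predictability inherited from the predictable iterates. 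Uniqueness follows from the same moment bound: for two solutions $u,\wt u$, the map $t \mapsto \sup_x \|(u - \wt u)(t,x)\|_{L^p(\O)}^2$ satisfies a linear Gronwall inequality with zero forcing and hence vanishes. Globality is automatic, since the constants are finite for each $T$ and the factorial always dominates the exponential growth in the Gronwall step.

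The main obstacle I anticipate is not the contraction mechanism itself — the factorial gain makes both convergence and uniqueness routine once the moment bound is in hand — but rather the careful verification that the stochastic integral in \eqref{NLWA2} is well defined at each Picard stage. One must check that each iterate is jointly measurable and predictable, so that the Walsh two-parameter stochastic integral against $\xi$ is meaningful, and that $G(t-s,x-y) F(u^{(n)}(s,y))$ belongs to the appropriate class of admissible integrands. Managing this adaptedness bookkeeping, together with justifying the interchange of the $L^p(\O)$-norm with the deterministic space-time integral via Minkowski's inequality, is the genuine technical heart of the argument.
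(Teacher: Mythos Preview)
Your proposal is correct and follows essentially the same approach as the paper: Picard iteration starting from the d'Alembert linear solution, the BDG-based moment bound on the stochastic convolution exploiting $\int_\R G(t-s,x-y)^2\,dy \sim (t-s)$, the resulting Gronwall recursion with factorial gain giving Cauchy iterates in $\Xc_{T,p}$, and uniqueness via the same estimate. The only cosmetic differences are that the paper proves uniqueness first (via the $p=2$ Ito isometry) and, after BDG, uses H\"older's inequality on the $p$th moment rather than Minkowski on the squared $L^p$-norm, but the mechanism and all key ingredients are identical.
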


\begin{proof}
First, we prove uniqueness.
Let $u_1$ and $u_2$ be solutions to  \eqref{NLWA2}, 
belonging to the class~$\Xc$ defined in \eqref{NLWA2b}.
By letting  $w = u_1 -u_2$, we have 
\[
w(t,x)
=
\int_0^t \int_\R G(t-s,x-y) \big\{ F(u_1(s,y)) - F (u_2(s,y)) \big\} \xi(dy ds).
\]

\noi
Then, by the Ito isometry
and the Lipschitz continuity of $F$, we have
\begin{align}
\begin{split}
\E \big[ |w(t,x)|^2 \big]
& =
\int_0^t \int_\R G(t-s,x-y)^2 \E \big[ |F(u_1(s,y)) - F (u_2(s,y))|^2 \big] dy ds\\
& \les
\int_0^t \int_\R G(t-s,x-y)^2 \E \big[ |w(s,y)|^2 \big] dy ds.
\end{split}
\label{AA1}
\end{align}

\noi
By setting
\[
H(t)
= \| w \|_{\Xc_{t, 2}}^2
= \sup_{0 \le s \le t} \sup_{x \in \R} \E \big[ |w(s,x)|^2 \big], 
\]

\noi
where the $\Xc_{t, 2}$-norm is as in \eqref{NLWA2a}, 
it follows from 
 \eqref{fundw} and \eqref{AA1} that 
\[
H(t)
\les
\int_0^t (t-s) H(s) ds.
\]

\noi
Since $H(0) = 0$, 
Gronwall's inequality yields that $H(t)= 0$
for any $t \in \R_+$.
This  proves  uniqueness of a solution.

Next, we prove existence.
Define a sequence $\{ u^{(n)}\}_{n = 0}^\infty$ by setting
\[
u^{(0)} (t,x) = \dt \int_\R G(t,x-y) u_0(y) dy
+ \int_\R G(t,x-y) u_1(y) dy
\]

\noi
and
\begin{equation}
u^{(n)} (t,x)
=
u^{(0)}(t,x)
+ \int_0^t \int_\R G(t-s, x-y) F(u^{(n-1)}(s,y)) \xi(dy ds)
\label{iter}
\end{equation}
for $n \in \N$.
Then,
$d_n = u^{(n)} - u^{(n-1)}$
satisfies
\[
d_n(t,x)
= \int_0^t \int_\R G(t-s, x-y) \big\{ F (u^{(n-1)}(s,y)) - F (u^{(n-2)}(s,y)) \big\} \xi(dy ds).
\]

\noi
Let $T> 0$ and  $2 \le p < \infty$.
Then, 
from 
the Burkholder--Davis--Gundy inequality, 
H\"older's inequality,
and the Lipschitz continuity of $F$, 
we have
\begin{equation}
\begin{aligned}
&\E \big[ |d_n(t,x)|^p \big]
\\
&\les
\E \bigg[
\bigg(
\int_0^t \int_\R G(t-s, x-y)^2 \big| F (u^{(n-1)}(s,y)) - F (u^{(n-2)}(s,y)) \big|^2 dy ds \bigg)^{\frac p2} \bigg]
\\
&\les
\bigg(
\int_0^t \int_\R G(t-s, x-y)^{\frac p{p-2}} dy ds \bigg)^{\frac{p-2}2}
\\
&\qquad
\times
\E \bigg[
\int_0^t \int_\R G(t-s, x-y)^{\frac p2} |d_{n-1}(s,y)|^p dy ds
\bigg].
\end{aligned}
\label{AA2}
\end{equation}

\noi
Hence, by defining $H_n$ by 
\[
H_n(t) = \| d_n \|_{\Xc_{t, p}}^p = \sup_{0 \le s \le t} \sup_{x \in \R} \E \big[ |d_n(s,x)|^p \big], 
\]

\noi
where the $\Xc_{t, p}$-norm is as in \eqref{NLWA2a}, 
it follows 
from \eqref{fundw} and \eqref{AA2}
that 
there exists a constant $C = C(T, p) >0$ such that
\[
H_n(t)
\le
C
\int_0^t H_{n-1}(s) ds
\]
for any $t \in [0,T]$.
Then,  a Gronwall-type argument  (see Lemma 6.5 in \cite{Kho09}, for example)
yields
\[
H_n(t)
\le H_1(T) \frac{(Ct)^{n-1}}{(n-1)!}
\]
for any $n \in \N$ and $t \in [0,T]$.
By summing over $n \in \N$, we conclude that, 
given any $T> 0$ and finite $p\ge 2$, 
there exists $C_0(T, p) > 0$ such that 
\[
\sum_{n=1}^\infty H_n(t)^{\frac 1p}  \le C_0(T, p)< \infty
\]
for 
any $t \in [0,T]$.
This implies that 
$u^{(n)}$ converges 
to some limit, denoted by $u$, 
with respect to the $\Xc_{T, p}$-norm 
for each $T > 0$ and finite $p \ge 2$.
In particular, 
$u$ is the limit of $u^{(n)}$
in $L^1([0,T] \times [-R,R] \times \O)$ for any $T, R>0$.
In view of the predictability of $u^{(n)}$, 
we conclude that the limit
$u$ is also predictable.
As a result, the limit $u$ belongs to the class~$\Xc$ defined in \eqref{NLWA2b}.
Furthermore, from  \eqref{iter},
we conclude that the limit $u$ 
almost surely satisfies  \eqref{NLWA2}  for any $(t,x) \in \R_+ \times \R$.
\end{proof}

\begin{proposition}
\label{PROP:Hol1}
Let  $T, L>0$, and $2 \le p < \infty$, 
and 
let  $u$ be the solution to \eqref{NLWA2} constructed 
 in Proposition \ref{PROP:A1}.
 Then,
we have
\[
\E \big[ |u(t,x) - u(t',x')|^p \big]
\les
|t-t'|^{\frac p2} + |x-x'|^{\frac p2}
\]
for any $t,t' \in [0,T]$ and $x,x' \in \R$
with $| x|, | x'|\le L$.
\end{proposition}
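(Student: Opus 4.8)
The plan is to write the solution as $u = u^{(0)} + I$, where $u^{(0)}$ denotes the two deterministic terms in \eqref{NLWA2} and
\[
I(t,x) = \int_0^t \int_\R G(t-s, x-y) F(u(s,y)) \, \xi(dy\, ds),
\]
and then to estimate the spatial increment $u(t,x) - u(t,x')$ and the temporal increment $u(t,x) - u(t',x)$ separately, recombining them at the end by the triangle inequality. The deterministic part is easy: by \eqref{fundw} one has the d'Alembert-type representation
\[
u^{(0)}(t,x) = \frac 12 \big( u_0(x+t) + u_0(x-t) \big) + \frac 12 \int_{x-t}^{x+t} u_1(y)\, dy,
\]
and since $u_0 \in C^1_b(\R)$ and $u_1 \in C_b(\R)$, this $u^{(0)}$ is Lipschitz in $(t,x)$ on $[0,T]\times\R$, a bound stronger than the claimed $\frac 12$-H\"older control.

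For the stochastic convolution $I$, the key structural observation is that, since $F$ is Lipschitz and $u \in \Xc$, the quantity $F(u(s,y))$ is bounded in $L^p(\O)$ uniformly on $[0,T]\times\R$, namely $\|F(u(s,y))\|_{L^p(\O)} \les 1 + \|u\|_{\Xc_{T,p}} < \infty$. I would then apply the Burkholder--Davis--Gundy inequality followed by Minkowski's integral inequality (to pull the $L^{p/2}(\O)$-norm inside the space-time integral), thereby reducing each increment to a purely deterministic integral of the kernel $G$. For instance, for the spatial increment,
\[
\E \big[ |I(t,x) - I(t,x')|^p \big]^{\frac 2p}
\les
\int_0^t \int_\R \big[ G(t-s,x-y) - G(t-s,x'-y) \big]^2 \, dy\, ds,
\]
and similarly for the temporal one.

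The technical heart is the evaluation of these deterministic kernel integrals, using $G(r,z) = \frac 12 \ind_{\{|z|<r\}}$ from \eqref{fundw}. For the spatial increment, at fixed $r = t-s$ the integrand $[G(r,x-y) - G(r,x'-y)]^2$ is a quarter of the indicator of the symmetric difference of the intervals $(x-r,x+r)$ and $(x'-r,x'+r)$, whose measure is $\min(2|x-x'|, 4r) \le 2|x-x'|$; integrating in $s$ over $[0,t]$ then gives a bound $\les_T |x-x'|$, hence $\E[|I(t,x)-I(t,x')|^p] \les |x-x'|^{\frac p2}$. For the temporal increment with $t' < t$, I would split $\int_0^t = \int_0^{t'} + \int_{t'}^t$: on $[t',t]$ one uses $\int_\R G(t-s,x-y)^2 \, dy = \frac{t-s}{2}$ to obtain a contribution of size $(t-t')^2 \les_T (t-t')$, while on $[0,t']$, since the support of $G(t'-s,\cdot)$ sits inside that of $G(t-s,\cdot)$, the squared difference reduces to the indicator of the annulus $\{ t'-s \le |x-y| < t-s \}$, of $y$-measure $2(t-t')$, again producing $\les_T (t-t')$. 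I expect the main obstacle to be precisely this temporal bookkeeping---keeping track of which kernel governs which time region and of the shrinking light cone---whereas the spatial estimate and the uniform $L^p$ control of $F(u)$ are comparatively routine. Combining the spatial and temporal bounds with the Lipschitz estimate for $u^{(0)}$ then yields the asserted H\"older bound.
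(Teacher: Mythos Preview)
Your proposal is correct and follows essentially the same route as the paper: decompose $u$ into the d'Alembert linear part and the stochastic convolution, use the Lipschitz property of $F$ together with $u\in\Xc$ to get a uniform $L^p$ bound on $F(u(s,y))$, apply BDG and Minkowski to reduce to deterministic kernel integrals, and evaluate those via the explicit form of $G$. The only cosmetic difference is the telescoping order---the paper writes $U(t,x)-U(t',x')=\I+\II+\III$ in a single step (with $\I$ over $[t',t]$, $\II$ the temporal kernel difference on $[0,t']$, and $\III$ the spatial kernel difference on $[0,t']$), whereas you handle the spatial and temporal increments entirely separately and combine at the end; your kernel computations (symmetric difference of intervals, the annulus $\{t'-s\le|x-y|<t-s\}$) match the paper's exactly.
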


\begin{proof}
We have 
\begin{align*}
 u_\text{lin}(t, x) 
 :\! & = \dt \int_\R G(t,x-y) u_0(y) dy
+ \int_\R G(t,x-y) u_1(y) dy\\
& = \frac 12 \big\{u_0(x+t) - u_0(x-t)\big\}
+ \frac 12 \int_{x-t}^{x+t} u_1(y) dy .
\end{align*}

\noi
Then, 
since $u_0 \in C^1_b(\R)$ and $u_1 \in C_b(\R)$,
we have
\begin{align*}
| u_\text{lin}(t, x)  -  u_\text{lin}(t', x') |^p
\les |t - t'|^p +  |x - x'|^p 
\les |t-t'|^{\frac p2} + |x-x'|^{\frac p2}
\end{align*}

\noi
for any $t,t' \in [0,T]$ and $x,x' \in \R$
such that $| x - x'|\le L$.

Next, we consider  the third term on the right-hand side of \eqref{NLWA2}
which we denotes by $U(t, x)$:
\[
U(t,x) = \int_0^t \int_\R G(t-s, x-y) F (u(s,y)) \xi(dy ds).
\]

\noi
For $0 \le t' \le t \le T$,
we have
\begin{align*}
U(t,x) - U(t',x')
&= \int_{t'}^t \int_\R G(t-s, x-y) F (u(s,y)) \xi(dy ds)
\\
&\quad
+ \int_0^{t'} \int_\R \{ G(t-s, x-y) - G(t'-s, x-y) \} F (u(s,y)) \xi(dy ds)
\\
&\quad
+ \int_0^{t'} \int_\R \{ G(t'-s, x-y) - G(t'-s, x'-y) \} F (u(s,y)) \xi(dy ds)
\\
&=:
\I + \II + \III.
\end{align*}

Let  $2 \le p < \infty$.
From 
the Lipschitz continuity of $F$ and 
the fact that $u \in \Xc$ (see \eqref{NLWA2a} and~\eqref{NLWA2b}) that
\begin{align}
\sup_{0 \le s \le T} \sup_{y \in \R}
\| F (u(s,y))\|_{L^p(\O)}
\les
\sup_{0 \le s \le T} \sup_{y \in \R}
\|u(s,y)\|_{L^p(\O)}
+F (0)
< \infty.
\label{FF1}
\end{align}

\noi
Then, 
from 
the Burkholder--Davis--Gundy inequality, 
Minkowski's inequality, and \eqref{FF1}
with~\eqref{fundw}, we have 
\begin{align*}
\E \big[ |\III|^p \big]
&\les
\E \bigg[
\bigg(
\int_0^{t'} \int_\R
|G(t'-s, x-y)- G(t'-s,x'-y)|^2 |F (u(s,y))|^2 dy ds
\bigg)^{\frac p2} \bigg]
\\
&\les
\bigg(
\int_0^{t'} \int_\R 
|G(t'-s, x-y)- G(t'-s,x'-y)|^2
\E \big[ |F (u(s,y))|^p \big]^{\frac 2p} dy ds
\bigg)^{\frac p2}\\
&\les
\bigg(
\int_0^{t'} \int_\R 
|G(t'-s, x-y)- G(t'-s,x'-y)|^2 dy ds
\bigg)^{\frac p2}
\\
&\les
|x-x'|^{\frac p2}.
\end{align*}

\noi
Similar computations yield
\[
\E \big[ |\I|^p \big]
+ \E \big[ |\II|^p \big]
\les
|t-t'|^{\frac p2}.
\]

\noi
This  concludes the proof of Proposition \ref{PROP:Hol1}.
\end{proof}


\begin{ackno}\rm

The authors would like to thank an anonymous referee
for the helpful comments which have improved
the presentation of the paper.
T.O.~was supported by the European Research Council (grant no.~864138 ``SingStochDispDyn"). 
M.O.~was supported by JSPS KAKENHI  Grant numbers 
JP23K03182 and JP23H01079.

\end{ackno}

\end{document}